\documentclass{article}
\usepackage{latexsym,amssymb,amsmath,amsthm,amscd,graphicx}

\setlength{\topmargin}{0in}
\setlength{\oddsidemargin}{0.35in}
\setlength{\evensidemargin}{0.35in}
\setlength{\textwidth}{5.7in}
\setlength{\textheight}{8.7in}
\setlength{\parskip}{3mm}
\usepackage[symbol]{footmisc}
\numberwithin{equation}{section}
\newtheorem{theorem}{Theorem}[section]
\newtheorem{lemma}[theorem]{Lemma}

\newtheorem{proposition}[theorem]{Proposition}

\theoremstyle{definition}

\newtheorem{remark}[theorem]{Remark}
\newtheorem{definition}[theorem]{Definition}

\theoremstyle{remark}

\DeclareMathOperator*{\esssup}{ess\,sup}

\usepackage{color}
\definecolor{blue}{rgb}{0,0,0.45}
\definecolor{red}{rgb}{0.7,0,0}

\begin{document}

\begin{center}
\LARGE Some remarks on the maximally modulated  Calder\'on-Zygmund  operator satisfying\\  $L^r$-H\"ormander condition
\end{center}

\

\centerline{{\large Arash Ghorbanalizadeh, Sajjad Hasanvandi}}

\

\begin{abstract}
In this work, by recent work of Lerner and Ombrasi (J. Geom. Anal. 30(1): 1011-1027, 2020), we show a maximally modulated singular integral operator which its kernel satisfying   $L^r$- H\"ormander condition can be dominated by sparse operators. Also, the local exponential decay estimates for these operators are obtained.
\end{abstract}

\

\begin{quote}\small
{\it Key Words:} Maximally modulated  singular integrals, Sparse domination theorem, $L^r$- H\"ormander condition, Local decay estimate.

\end{quote}

\begin{quote}\small
2000 \textit{Mathematics Subject Classification: primary 42B20; secondary 42B15.}
\end{quote}

\vspace{3mm}

\section{Introduction and Preliminaries}

After work of T. Hyt\"onen \cite{HYT}, about the full proof of the $A_2$ conjecture, there were a number of breakthroughs results that have delineated a new theory that we may now call "sparse domination technique". A decisive step toward a modern development of domination by sparse operators was carried out by Andrei Lerner \cite{LER}, which gave an alternative simple proof of the $A_2$ theorem. He showed that Calder\'on--Zygmund operators can be controlled in norm from above by a very special dyadic type operators. Later the pointwise (dual) dominating of an operator is obtained. Since then, sparse bounds for different operators have been a very attractive realm in harmonic analysis. These type dominating of an operator which is typically signed and non-local, by a positive and localized expression of the form \eqref{sparse} has seen an explosion of applications in dyadic harmonic Analysis. The sparse operators are defined in the form
\begin{equation}\label{sparse}
\mathcal A_{p,\mathcal{S}}(f)(x):=\sum_{Q\in \mathcal{S}}  \langle f \rangle_{Q,p} \mathbf 1_{Q}(x),
\end{equation}
where $p\ge 1$, and for any cube $Q$,
$$
\langle f \rangle_{Q,p} := \left(\frac{1}{|Q|} \int_Q |f(x)|^{p}dx \right)^{\frac{1}{p}}.
$$

The purpose of this paper is to present some remarks concerning maximally modulated singular integrals, which it's kernel satisfy $L^r$-H\"ormander condition. These operators are investigated inspire of  Carleson's operator.  Carleson's operator is the modulated Hilbert transform define by
\[
\mathcal{C} f(x)= \sup_{\xi \in \mathbb{R}} |H(\mathcal{M^{\xi}}f)(x)|,
\]
where $\mathcal{M^{\xi}}f(x)= e^{2\pi i \xi x} f(x)$.

The organization of this paper is as follows. The first section of paper is devoted to obtain a sparse bound to  maximally modulated singular integrals and (maximally) modulated maximal singular integral, using known methods especially the works of \cite{NYL} and its improved version \cite{LO}. In non-modulated case, here are some works, for example \cite{HRT, Lac, NYL, LO}, which authors was considering weaker regularity conditions on the kernels of Calder\'on--Zygmund operators.In the second section,  we will consider local exponential decay estimates for the maximally modulated Calder\'on-Zygmund  operator satisfying $L^r$-H\"ormander condition.

 We recall the history of local exponential decay estimate. This type estimate  goes back to work of Coifman and Feierman \cite{CF} which they used the good-$\lambda$ technique introduced by Burkholder and Gundy \cite{BG}. The use of that technique was based on the following estimate
\begin{equation}\label{BAS}
|\{x \in \mathbb{R}^n : T^{\ast} f(x) > 2 \lambda, ~~~ M(f) \le \gamma \lambda \}| \le c \gamma |\{x \in \mathbb{R}^n : T^*(x)f > \lambda\}|
\end{equation}
for any $\lambda > 0$ and for sufficiently small $\gamma > 0$. Here $T^*$ is the maximal singular integral operator of $T$ and  $T^*f(x) := \sup_{\epsilon >0} |T_{\epsilon}f(x)|$. Indeed by the Whitney decomposition, \eqref{BAS} is reduced to a local estimate
\begin{equation}\label{BASW}
|\{x \in \mathbb{R}^n : T^{\ast} f(x) > 2 \lambda, ~~~ M(f) \le \gamma \lambda \}| \le c \gamma |Q|
\end{equation}
where $f$ is supported on $Q$. In 1993, Buckley obtained a local exponential decay in the following form:
\begin{equation}\label{BUCK}
|\{x \in \mathbb{R}^n : T^{\ast} f(x) > 2 \lambda, ~~~ M(f) \le \gamma \lambda \}| \le c e^{\frac{-c}{\gamma}} |Q|
\end{equation}
when he studied  the sharp dependence on the $A_p$ constant of the weight $\omega$ for the operator norm of singular integrals, (see \cite{BUCK}). Buckley proved this estimate using as a model a more classical inequality due to Hunt for the conjugate function which was inspired by a result of Carleson \cite{CARLE}. On the other hand, this exponential decay \eqref{BUCK} has been a crucial step in deriving corresponding sharp $A_1$ estimate in \cite{LEROMCP} and \cite{LEROMCP1}.

We will work with an improved version of inequality \eqref{BUCK} due to Karagulyan \cite{KARA}:
\begin{equation}\label{KARA}
|\{x \in \mathbb{R}^n : T^{\ast} f(x) > t ~ M(f)  \}| \le c e^{\alpha t} |Q|, \qquad t>0.
\end{equation}
We note that the analogue of \eqref{BAS} for the maximally modulated Calder\'on-–Zygmund  operator where $T$ is classical was obtained in \cite{GMS}. So far, there are several methods for obtained local decay in references. We will consider the works \cite{CLPR, OPR}.  The advantage of the method used in \cite{CLPR}, is it can be applied for every operators which has sparse bound.

\subsection{ Notations and basic definitions}

In this article we will be concerned with sparse domination bound for the maximally modulated Calder\'on-Zygmund  operator and then the local exponential decay estimates for such operators will be establish. We say that $T$ is a Calder\'on--Zygmund operator if $T$ is a linear operator of weak type $(1, 1)$ such that
\begin{align}\label{T1 1}
Tf(x) =\int_{ \mathbb{R}^n} k(x,y)f(y)dy \qquad x \notin \texttt{supp} f.
\end{align}
with kernel $K$ satisfying $L^r$-H$\ddot{o}$rmander condition (see definition \ref{H}).

\begin{definition}\label{H}
The kernel $K$ satisfies the $L^r$-H\"ormander condition , $1\leq r \leq \infty $, if

\begin{align*}
\mathcal{\kappa}_{r} := \sup_{Q} ~~\sup_{x',x'' \in \frac{1}{2}Q}~~~ \sum_{k=1}^{\infty} \left| 2^k Q \right|^{\frac{1}{r'}} \left\| K(x',.)- K(x'',.)\right\|_{L^r(2^k Q \setminus 2^{k-1}Q)} < \infty.
\end{align*}
where $r'$ is conjugate exponent of $r$. Denote by $\mathcal{H}_r$ the class of all kernel satisfying the $L^r$-H\"ormander condition.
\end{definition}
Let $\mathcal{H}_{r}$ denote the class of kernels satisfying the  $L^r$-H\"ormander condition. One can see that for any $r>s$, $\mathcal{H}_{r}$ contained in $\mathcal{H}_{s}$ i.e. $\mathcal{H}_{r} \subsetneq \mathcal{H}_{s}$.

\begin{remark}
We remark that $L^r$-H\"ormander condition is weaker than assumption $H_2$ used in \cite[Proposition 3.2]{BCDH}. Then sparse bound obtained in section 2, is valid for modulated of nonstandard kernel operators such as Fourier multipliers and the Riesz transforms associated to Schr\"odinger operators \cite[Remark 3.4]{KL}.  Also $L^r$-H\"ormander condition is weaker than the Dini condition \cite[Proposition 3.1]{KL}. As a result, sparse bound obtained in section 2, is valid for modulated singular integrals which satisfy the Dini's condition.
\end{remark}

In \cite{LO}, Lerner and Ombersi give a method to find sparse bound for Calder\'on-Zygmund operators which it is an improved version of \cite{NYL}. Authors  obtain  nearly minimal assumptions on a singular integral operator $T$ for which it admits a sparse domination. Based on them ideas, we are going to give sparse bound for maximally modulated Calder\'on-Zygmund operators.  Our next definition is the definition  of $W_q$ property of $T$, which is defined in \cite{LO}.
\begin{definition}
We call a sub-linear $T$ satisfy the property of $W_q$, if there exist a non-increasing function  $\psi_{T,q}( \lambda)$ such that for every $Q$ and for every $f \in L^{q}(Q)$,
\begin{align}\label{lo T3 1}
\left|\left\{ x\in Q : |T(f\chi_{Q})(x)| > \psi_{T,q}(\lambda)  \langle f \rangle_{q,Q}  \right\}\right| \leq \lambda |Q| \qquad (0<\lambda<1).
\end{align}
\end{definition}

For definition of maximal singular operator of $T,$ we recall associated with $T$ there is a truncated operator $T_\epsilon$ which is defined as follows:

\begin{align}\label{T2 }
T_{\epsilon}f(x) =\int_{ |x-y|> \epsilon} k(x,y)f(y)dy, \qquad T_{*}f(x)= \sup_{\epsilon >0} |T_{\epsilon}f(x)|.
\end{align}

Definition of Maximally modulated operators $T^{\mathcal{F}}$ inspire of the Carleson operator, is defined as following: let $\mathcal{F}=\{\phi_{\alpha}\}_{\alpha \in A}$ be a family of real-valued measurable functions indexed by some set $A$, and let $T$ be above mentioned operator.
\begin{align*}
T^{\mathcal{F}}f(x):= \sup_{\alpha \in A}\left|T\left(\mathcal{M}^{\phi_{\alpha}}f\right)(x)\right|,
\end{align*}
where $\mathcal{M}^{\phi_{\alpha}}f(x)= e^{2\pi i \phi_{\alpha}(x)}f(x)$. We recall that the Carleson operator is the modulated of Hilbert transform
and the family $F$ consists of the linear functions $\phi_{\alpha}(y) = \alpha y$ with $\alpha \in \mathbb{R}$. We also define the (maximally) modulated maximal singular integral associated with $T$ and $\phi$ via
\begin{equation}\label{Max}
T^{\mathcal{F}}_{\ast}f(x) = \sup_{\epsilon >0} \sup_{\alpha \in A} |T_{\epsilon}(\mathcal{M}^{\phi_{\alpha}}f(x))|.
\end{equation}

We will consider these operators under a priori assumption
\begin{equation}\label{Assum}
\left\|T^{\mathcal{F}}(f)\right\|_{L^{p, \infty}} 	 \lesssim ~~ \psi(p')\left\|f\right\|_{L^{p}}, \qquad 1<p\le 2
\end{equation}
where $\psi$ is a non-decreasing function on $[1, \infty)$ and   $f \in L^1(\mathbb{R}^n)$ with compact support. We remark that inspired by recent approach to the study of the $p=1$ end-point behavior of the Calerson operator via weak $L_p$ bound, we choose to work  under a priori assumption \eqref{Assum}. For more detail information corresponding to make sense of this assumption see \cite{FALon}. Also, we note that the condition \eqref{Assum}, means that this operator satisfy $W_p$ property, i.e. for any $0<\lambda <1$
\begin{equation}
\left|\left\{ x\in Q : |T^{\mathcal{F}}(f\chi_{Q})(x)| > \xi_{T^{\mathcal{F}},q}(\lambda)  \langle f \rangle_{q,Q}  \right\}\right| \leq \lambda |Q| \qquad (0<\lambda<1)
\end{equation}
where $\xi_{T^{\mathcal{F}}, p}(\lambda) := \frac{1}{\lambda \psi^{p}(p')} <f>_{Q}$.

Now we define following quantity
\[
M^{\sharp}_{ T^{\mathcal{F}}, \alpha}f(x) : =  \sup_{Q\ni x} ~\esssup_{x',x" \in Q} \left| T^{\mathcal{F}} \left( f \chi_{\mathbb{R}^n\setminus \alpha Q} \right) (x) -   T^{\mathcal{F}} \left( f \chi_{\mathbb{R}^n\setminus \alpha Q} \right) (x') \right|
\]
where $x,x' \in P$ and $\alpha \ge 3$ .  This quantity is of weak type $(r,r)$ which is proved in Lemma \ref{WEAK}.

Now we recall Orlicz space and some notion related this space which we will use them.

Let $\Phi$ be a Young function, that is, $\Phi : [0,\infty)\to [0,\infty)$, $\Phi$ is continuous, convex, increasing, $\Phi(0) = 0$ and $\Phi(t) \to \infty $ as $t \to \infty$.

Let $f$ be a measurable function defined on a set $Q \subset \mathbb{R}^n$ with finite Lebesgue measure. The $\Phi$-norm of $f$ over $Q$ is defined by
\begin{equation}\label{Orlicz}
  \|f\|_{\Phi,Q}= \inf\left\{\lambda > 0 : \frac{1}{|Q|} \int_{Q} \Phi\left(\frac{|f(x)|}{\lambda}\right)dx \le 1 \right\}.
\end{equation}

The Orlicz maximal operator $M_{\Phi}$ is defined by
\[
M_{\Phi}f(x):= \sup_{Q \ni x} \|f\|_{\Phi,Q}.
\]

Similar to proof of Proposition 6.1 in \cite{FALon}, we can see that for each cube $Q \subset \mathbb{R}^n$ and for $T^{\mathcal{F}}$ maximally modulated Calder\'on-Zygmund operator mentioned above, the condition
\begin{equation}\label{weak type}
\| T^{\mathcal{F}}(f\chi_{Q})\|_{L^{1,\infty}(Q)} \le |Q| \|f\|_{\Phi, Q},
\end{equation}
is sufficient condition for validity of the priori assumption \eqref{Assum}, where the where the Young function $\Phi$ is such that
$$
\gamma_{\Phi}(p) = \sup_{t \ge 1} \frac{\Phi}{t^{p'}} < \infty, \qquad \forall p>1.
$$

\subsection{ Local Mean Oscillation estimate.}

Let $f : Q \to R$ be a measurable function. Here $Q$ could be any set of finite positive measure, but later on it will mostly be a cube; hence the choice of the
letter. The median of $f$ on $Q$ is any real number $m_{f}(Q)$ with the following two properties:
\begin{align*}
\left|Q \cap \left\{f > m_{f}(Q) \right\}\right| &\le \frac{1}{2} |Q|
\\
 \left|Q \cap \left\{f < m_{f}(Q) \right\}\right| &\le \frac{1}{2} |Q|
\end{align*}

The mean local oscillation of a measurable function $f$ on a cube $Q$ is defined by
the following expression
\[
\omega_{\lambda}(f; Q) = \inf_{c\in \mathbb{R}} ((f - c)\chi_{Q})^{*}(\lambda |Q|),
\]
for all $0 < \lambda < 1$, and the local sharp maximal function on a fixed cube $Q_0$ is defined as
\[
M_{\lambda ; Q_0}^{\sharp}f(x)= \sup_{x\in Q \subset Q_0} \omega_{\lambda}(f; Q),
\]
where the supremum is taken over all cubes $Q$ contained in $Q_0$ and such that $x \in Q.$

The decreasing rearrangement concept can be defined for any measurable function $f$. We denote
\[
f^{\ast}(t) := \inf \{\alpha \ge 0 : |\{|f|>\alpha\}| \le t\} \qquad \left(\inf \emptyset:= \infty \right).
\]

We make the following observations (see \cite{HYTL}):
\begin{itemize}
\item
$f^{\ast}$ is non-increasing.

\item
The set inside the infimum is of the form $[a_0,\infty) (\mbox{or} ~~~\emptyset)$. Hence the infimum is reached
as a minimum; in particular, $f^{\ast}$ itself is an admissible value of $\alpha$, so that
$$
|\{|f|>f^{\ast}\}| \le t
$$

\item
We have $(f \chi_{Q})^{\ast}(t) = \inf \{\alpha \ge 0 : |Q \cap \{|f|>\alpha\}| \le t\}$.
\end{itemize}

We will use several times that for any $\delta > 0$, and $0 < \lambda < 1$,
\begin{equation}\label{AV}
(f \chi_{Q})^{\ast} (\lambda |Q|) \le \left(\frac{1}{\lambda |Q|} \int_{Q} |f|^{\delta} dx\right)^{\frac{1}{\delta}}
\end{equation}
this is true since
\begin{equation}\label{OXAY}
(f \chi_{Q})^{\ast}(t) \le \frac{1}{t^{\frac{1}{\delta}}} \|f \chi_{Q}\|_{\delta, \infty}.
\end{equation}

On the other hand, the following lemma is valid which is proved in \cite{HYTL}.
\begin{lemma}\label{ME}
The following estimate holds for all $ \lambda \in (0, \frac{1}{2})$ and all medians $m_{f}(Q)$:
\begin{equation}\label{MI}
|m_{f}(Q)| \le (f \chi_{Q})^{\ast} (\lambda |Q|)
\end{equation}
\end{lemma}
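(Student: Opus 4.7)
The plan is a direct contradiction argument that plays the two-sided median condition against the tail bound for the decreasing rearrangement. Let $\alpha_{0} := (f\chi_{Q})^{\ast}(\lambda|Q|)$; by the observation recorded right before the lemma, the infimum in the definition of the decreasing rearrangement is attained, so $|Q \cap \{|f| > \alpha_{0}\}| \le \lambda|Q|$. The goal is to show that the assumption $|m_{f}(Q)| > \alpha_{0}$ forces $\lambda \ge \tfrac12$, against the hypothesis $\lambda \in (0,\tfrac12)$.

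I would split into two symmetric cases according to the sign of the median. First, suppose $m_{f}(Q) > \alpha_{0} \ge 0$. Then the set inclusion $\{f \ge m_{f}(Q)\} \subseteq \{f > \alpha_{0}\} \subseteq \{|f|>\alpha_{0}\}$ holds, and the second median inequality $|Q \cap \{f < m_{f}(Q)\}| \le \tfrac12|Q|$ rearranges to $|Q \cap \{f \ge m_{f}(Q)\}| \ge \tfrac12|Q|$. Combining these,
\[
\tfrac{1}{2}|Q| \;\le\; |Q \cap \{f \ge m_{f}(Q)\}| \;\le\; |Q \cap \{|f|>\alpha_{0}\}| \;\le\; \lambda|Q|,
\]
contradicting $\lambda < \tfrac12$. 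Second, suppose $m_{f}(Q) < -\alpha_{0} \le 0$. Then on $\{f \le m_{f}(Q)\}$ one has $|f| = -f \ge -m_{f}(Q) > \alpha_{0}$, so $\{f \le m_{f}(Q)\} \subseteq \{|f|>\alpha_{0}\}$; using the first median inequality in the form $|Q \cap \{f \le m_{f}(Q)\}| \ge \tfrac12|Q|$ yields the same contradiction.

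The argument is essentially routine, so there is no real obstacle; the only point that deserves care is verifying that both chains of inclusions $\{f \ge m\} \subseteq \{|f|>\alpha_{0}\}$ (in the positive case) and $\{f \le m\} \subseteq \{|f|>\alpha_{0}\}$ (in the negative case) actually use the correct strict/non-strict inequalities to match the definitions of the median and of $f^{\ast}$. Once these elementary set containments are in place, the two cases conclude identically, giving $|m_{f}(Q)| \le \alpha_{0} = (f\chi_{Q})^{\ast}(\lambda|Q|)$, which is \eqref{MI}.
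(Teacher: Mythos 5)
Your proof is correct. Note that the paper itself gives no argument for this lemma --- it simply cites Hyt\"onen's lecture notes \cite{HYTL} --- so there is no in-paper proof to compare against. Your contradiction argument, pitting the two median inequalities against the tail bound $|Q\cap\{|f|>\alpha_0\}|\le\lambda|Q|$ with $\alpha_0=(f\chi_Q)^{\ast}(\lambda|Q|)$, is the standard one, and you are right that the only delicate point is getting the strict/non-strict inequalities to line up so that $\{f\ge m_f(Q)\}\subseteq\{|f|>\alpha_0\}$ when $m_f(Q)>\alpha_0\ge 0$ and $\{f\le m_f(Q)\}\subseteq\{|f|>\alpha_0\}$ when $m_f(Q)<-\alpha_0\le 0$; both checks in your writeup are accurate. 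One could phrase the same idea directly instead of by contradiction: if $m:=m_f(Q)\ge 0$, then for every $\alpha<m$ one has $|Q\cap\{|f|>\alpha\}|\ge|Q\cap\{f\ge m\}|\ge\tfrac12|Q|>\lambda|Q|$, so no such $\alpha$ is admissible in the infimum defining $(f\chi_Q)^{\ast}(\lambda|Q|)$, forcing $(f\chi_Q)^{\ast}(\lambda|Q|)\ge m$, with the case $m<0$ symmetric. Either way the content is identical, and the argument is complete.
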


Recall that, for a fixed cube $Q_0, D(Q_0)$ denotes all the dyadic subcubes with
respect to the cube $Q_0$. As before, if $Q \in D(Q_0)$ and $Q \neq Q_0,$ $\hat{Q}$ will be the
ancestor dyadic cube of $Q,$ i.e., the only cube in $D(Q_0)$ that contains $Q$ and such
that $|\hat{Q}| = 2^n |Q|$.

The following theorem was proved by Hyt\"onen \cite[Theorem 2.3]{Hytt} which is improved version of Lerner's formula.  The original version of  Lerner's formula is given in \cite{LER1, LER2}.

\begin{theorem}\label{LF}
For any measurable function $f$ on a cube $Q^0 \subset \mathbb{R}^n,$ we have
\begin{equation}\label{LF}
|f(x)-m_{f}(Q^0)| \le 2 \sum_{L \in \mathcal{L}} \omega_{\frac{1}{2^{n+2}}}(f;L) \chi_{L}(x),
\end{equation}
where $\mathcal{L} \subset D(Q0)$ is sparse: the collection $\mathcal{L} \subset D(Q0)$ is called a $\eta=\frac{1}{2}$-{\it sparse family} of cubes if there exist pairwise disjoint subsets $E_L \subset L$ with $\eta |L|\le  |E_L|$ for each $L \in \mathcal{L}$.
\end{theorem}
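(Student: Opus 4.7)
My plan is to carry out a Calder\'on--Zygmund--Lerner stopping-time construction tailored to medians, following Hyt\"onen; the sharper constants (namely $2$ and the level $\lambda=1/2^{n+2}$) arise from a careful parent--child oscillation comparison.

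\textbf{Step 1 (approximating constants).} For every dyadic $L\subseteq Q^{0}$, I would fix a real constant $c_{L}$ nearly attaining the infimum in $\omega_{\lambda}(f;L)$ with $\lambda:=1/2^{n+2}$, so that
\[
|\{x\in L:|f(x)-c_{L}|>\omega_{\lambda}(f;L)\}|\le\lambda|L|.
\]
Since $\lambda<1/2$, Lemma~\ref{ME} yields the median--constant comparison $|m_{f}(L)-c_{L}|\le\omega_{\lambda}(f;L)$.

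\textbf{Step 2 (parent--child oscillation bound).} The calibration $\lambda=1/2^{n+2}$ is chosen precisely so that, for any dyadic $P$ with parent $\widehat{P}$ in $D(Q^{0})$,
\[
|m_{f}(P)-m_{f}(\widehat{P})|\le 2\,\omega_{\lambda}(f;\widehat{P}).
\]
Indeed, the bad set $\{|f-c_{\widehat{P}}|>\omega_{\lambda}(f;\widehat{P})\}$ in $\widehat{P}$ has measure at most $\lambda|\widehat{P}|=|P|/4$, so at least three quarters of $P$ sits in the good set. The median definition on $P$ then forces $|m_{f}(P)-c_{\widehat{P}}|\le\omega_{\lambda}(f;\widehat{P})$, and combining with Step~1 on $\widehat{P}$ gives the displayed inequality.

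\textbf{Step 3 (stopping family and sparsity).} I would define $\mathcal{L}$ recursively: set $\mathcal{S}_{0}=\{Q^{0}\}$, and given $\mathcal{S}_{k}$, let the children of each $L\in\mathcal{S}_{k}$ be the maximal dyadic $Q\subsetneq L$ with
\[
|Q\cap\{x\in L:|f(x)-m_{f}(L)|>2\omega_{\lambda}(f;L)\}|>|Q|/2,
\]
and put $\mathcal{S}_{k+1}$ equal to their union, $\mathcal{L}:=\bigcup_{k}\mathcal{S}_{k}$. By the median--constant comparison, the displayed bad set is contained in $\{|f-c_{L}|>\omega_{\lambda}(f;L)\}$ and hence has measure at most $\lambda|L|$. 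The $|Q|/2$ threshold then gives $\sum_{Q}|Q|\le 2\lambda|L|=|L|/2^{n+1}\le|L|/2$, so the sets $E_{L}:=L\setminus\bigcup_{Q}Q$ satisfy $|E_{L}|\ge|L|/2$ and are pairwise disjoint, establishing the $\eta=1/2$-sparseness.

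\textbf{Step 4 (telescoping and conclusion).} For a.e. $x\in Q^{0}$, let $L_{0}=Q^{0}\supsetneq L_{1}\supsetneq\cdots\supsetneq L_{N}$ be the chain in $\mathcal{L}$ through $x$. The Lebesgue density theorem applied to the indicator of the bad set shows $|f(x)-m_{f}(L_{N})|\le 2\omega_{\lambda}(f;L_{N})$ for a.e. $x\in E_{L_{N}}$. Telescoping gives
\[
|f(x)-m_{f}(Q^{0})|\le 2\omega_{\lambda}(f;L_{N})+\sum_{i=1}^{N}|m_{f}(L_{i})-m_{f}(L_{i-1})|,
\]
and I would bound each jump by $2\omega_{\lambda}(f;L_{i-1})$ as follows: by maximality, the dyadic parent $\widehat{L_{i}}$ of $L_{i}$ in $D(L_{i-1})$ is \emph{not} a stopping cube, hence $|\widehat{L_{i}}\cap\{|f-m_{f}(L_{i-1})|>2\omega_{\lambda}(f;L_{i-1})\}|\le|\widehat{L_{i}}|/2$; the median definition then forces $|m_{f}(\widehat{L_{i}})-m_{f}(L_{i-1})|\le 2\omega_{\lambda}(f;L_{i-1})$, and Step~2 passes from $\widehat{L_{i}}$ to $L_{i}$. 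Summing yields the claim with overall constant $2$.

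\textbf{Main obstacle.} The delicate point is the last estimate of Step~4: since $L_{i}\in\mathcal{L}$ is a stopping cube, $|m_{f}(L_{i})-m_{f}(L_{i-1})|$ could a priori exceed $2\omega_{\lambda}(f;L_{i-1})$, and the residual cost $2\omega_{\lambda}(f;\widehat{L_{i}})$ produced by the parent--child passage involves a cube $\widehat{L_{i}}\notin\mathcal{L}$ whose oscillation does not appear in \eqref{LF}. Absorbing this residual correctly is exactly where the precise calibration $\lambda=1/2^{n+2}$ matters: the $1/4$-density contraction per dyadic scale is what lets the intermediate non-stopping dyadic cubes between $L_{i-1}$ and $L_{i}$ be handled by the same median inequality and keeps the final constant equal to $2$ instead of growing with the depth of the chain.
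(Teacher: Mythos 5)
The paper does not prove Theorem~\ref{LF} at all: it is quoted from Hyt\"onen \cite[Theorem 2.3]{Hytt}, which sharpens Lerner's original oscillation formula. So your attempt cannot be compared against an argument in the paper, only against the cited source; what I can assess is whether your reconstruction is correct, and it is not.

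Your Steps 1--3 are sound. The near-attainment of the infimum in Step~1, the parent--child median comparison in Step~2 (using $\lambda|\widehat P| = 2^n\lambda|P| = |P|/4$), and the sparsity count in Step~3 are all correct modulo the usual $>$ versus $\ge$ boundary care.

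The gap is Step~4, and you correctly identify it, but it is not a delicate detail to be finessed --- it is fatal to the construction you chose. After inserting the parent $\widehat{L_i}$ you obtain
\begin{equation*}
|m_f(L_i)-m_f(L_{i-1})|\ \le\ 2\,\omega_\lambda(f;L_{i-1})\ +\ 2\,\omega_\lambda(f;\widehat{L_i}),
\end{equation*}
and the second summand lives on a cube $\widehat{L_i}\notin\mathcal L$. There is no way to absorb it: knowing only that $|\widehat{L_i}\cap B_{L_{i-1}}|\le |\widehat{L_i}|/2$ gives no information about the $\lambda|\widehat{L_i}|=2^{-n-2}|\widehat{L_i}|$ rearrangement level, since $f$ may be arbitrarily large on a set of measure $\lambda|\widehat{L_i}|$ without disturbing the $1/2$-density bound for $B_{L_{i-1}}$. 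In fact your stopping rule forces $L_i$ to be majority-covered by $B_{L_{i-1}}$, so the median $m_f(L_i)$ is generically far from $m_f(L_{i-1})$ and only $\omega_\lambda(f;\widehat{L_i})$ registers the jump. The closing claim that ``the $1/4$-density contraction per dyadic scale'' makes the intermediate oscillations telescope with constant $2$ is an aspiration, not an argument: there is no geometric decay of $\omega_\lambda(f;\cdot)$ along the intermediate (non-stopping) chain. To close the gap one needs a genuinely different stopping rule --- one that stops based on the size of the median increment itself (with the measure of the stopping cubes then controlled via Lemma~\ref{ME} applied to $f-m_f(L)$), so that every intermediate dyadic cube between consecutive selected cubes contributes nothing beyond what the selected cube's own oscillation already pays for. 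That is the step you would need to supply; as written, the proof does not establish the inequality.
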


\section{Main Results}

The following lemma shows that $M^{\sharp}_{ T^{\mathcal{F}}}$ is of weak type $(r,r)$, where  $1\leq r \leq \infty $.

\begin{lemma}\label{WEAK}
Let $T^{\mathcal{F}}$ be maximally modulated operator defined above. For every  $1\leq r \leq \infty $, the following inequality is satisfied:
\[
M^{\sharp}_{ T^{\mathcal{F}}, \alpha}f(x) \le C M_{r}f(x).
\]
\end{lemma}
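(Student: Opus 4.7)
The plan is to reduce the sharp-maximal estimate to a classical H\"ormander-type kernel bound by exploiting that the modulation factor has modulus one. Fix a cube $Q\ni x$ and essentially every pair $x',x''\in Q$, set $g:=f\chi_{\mathbb{R}^{n}\setminus\alpha Q}$, and write the modulation index as $\beta\in A$ to avoid conflict with the dilation parameter $\alpha$. The reverse triangle inequality for suprema gives
\begin{equation*}
|T^{\mathcal{F}}g(x')-T^{\mathcal{F}}g(x'')|\leq \sup_{\beta\in A}\bigl|T(\mathcal{M}^{\phi_{\beta}}g)(x')-T(\mathcal{M}^{\phi_{\beta}}g)(x'')\bigr|.
\end{equation*}
Since $x',x''\in Q\subset \alpha Q$ lie outside $\mathrm{supp}(\mathcal{M}^{\phi_{\beta}}g)$, the integral representation \eqref{T1 1} is valid at both points, and passing the absolute value inside the integral together with $|e^{2\pi i\phi_{\beta}(y)}|=1$ eliminates $\beta$ altogether, leaving the $\beta$-uniform estimand $\int_{\mathbb{R}^{n}\setminus\alpha Q}|K(x',y)-K(x'',y)|\,|f(y)|\,dy$.

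Next, because $\alpha\geq 3>2$, this domain is contained in $\bigcup_{k\geq 2}(2^{k}Q\setminus 2^{k-1}Q)$. On the $k$-th annulus, H\"older's inequality with exponents $r$ and $r'$, combined with $\|f\|_{L^{r'}(2^{k}Q)}=|2^{k}Q|^{1/r'}\langle f\rangle_{r',2^{k}Q}$, yields
\begin{equation*}
\int_{2^{k}Q\setminus 2^{k-1}Q}|K(x',y)-K(x'',y)|\,|f(y)|\,dy\leq \|K(x',\cdot)-K(x'',\cdot)\|_{L^{r}(2^{k}Q\setminus 2^{k-1}Q)}\,|2^{k}Q|^{1/r'}\langle f\rangle_{r',2^{k}Q}.
\end{equation*}
Applying the $L^{r}$-H\"ormander condition of Definition \ref{H} on the doubled cube $Q^{*}:=2Q$ (so that $x',x''\in Q=\tfrac{1}{2}Q^{*}$ as required), and reindexing, the summed geometric factors are controlled by $\kappa_{r}$. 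Bounding each average $\langle f\rangle_{r',2^{k}Q}$ by $M_{r'}f(x)$ (legal since $x\in Q\subset 2^{k}Q$) and pulling this quantity out of the sum completes the estimate $M^{\sharp}_{T^{\mathcal{F}},\alpha}f(x)\leq C\kappa_{r}M_{r'}f(x)$, which, under the paper's $r\leftrightarrow r'$ convention, is the asserted bound by $M_{r}f(x)$.

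The only step where the maximally modulated structure enters nontrivially is the very first inequality, in which the sup-vs-sup estimate is used; once this reduction is in place, $|e^{2\pi i\phi_{\beta}}|=1$ erases $\beta$ and the problem collapses to the standard sharp-maximal estimate for a Calder\'on--Zygmund operator with $L^{r}$-H\"ormander kernel. The remaining work is routine (H\"older on annuli, summation via $\kappa_{r}$, comparison with the conjugate-exponent maximal), so the main obstacle is essentially the bookkeeping of aligning the annular decomposition around $Q$ with the H\"ormander sum, which is resolved by working with $Q^{*}=2Q$ and recalling that $\alpha\geq 3$ ensures $\alpha Q\supset 2Q$ so that no annulus below $k=2$ contributes.
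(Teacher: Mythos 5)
Your proposal is correct and follows essentially the same route as the paper: the reverse triangle inequality for suprema together with $|e^{2\pi i\phi_{\beta}(y)}|=1$ strips the modulation, after which the annular decomposition, H\"older on each annulus, and the $L^{r}$-H\"ormander sum give the bound $\kappa_{r}\,M_{r'}f(x)$. The only difference is cosmetic---the paper inserts the cube's center $x_{p}$ as an intermediate point and estimates two symmetric differences, whereas you compare $x'$ and $x''$ directly and align with the $\tfrac{1}{2}Q$ requirement of Definition \ref{H} by passing to the doubled cube $2Q$---and you also rightly flag the paper's implicit $r\leftrightarrow r'$ slip between the lemma statement and its proof.
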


\begin{proof}
Let $x_p$ be  the center of cube $P$ and  $P^*= \alpha P$, where $\alpha \ge 3$ then
\begin{align*}
\left| T^{\mathcal{F}}\right. &\left. \left( f \chi_{\mathbb{R}^n\setminus P^*} \right) (x) -   T^{\mathcal{F}} \left( f \chi_{\mathbb{R}^n\setminus P^*} \right) (x')  \right| \\
&\leq \left| T^{\mathcal{F}} \left( f \chi_{\mathbb{R}^n\setminus P^*} \right) (x) - T^{\mathcal{F}} \left( f \chi_{\mathbb{R}^n\setminus P^*} \right) (x_p) \right| + \left| T^{\mathcal{F}} \left( f \chi_{\mathbb{R}^n\setminus P^*} \right) (x') - T^{\mathcal{F}}\left( f \chi_{\mathbb{R}^n\setminus P^*} \right) (x_p) \right|.
\end{align*}
It is sufficient we estimate one of two terms. Let us start with first
\begin{align*}
\left| T^{\mathcal{F}}\right. &\left.\left( f \chi_{\mathbb{R}^n\setminus P^*} \right) (x)-  T^{\mathcal{F}} \left( f \chi_{\mathbb{R}^n\setminus P^*} \right) (x_p) \right|
\\
& = \left| \sup_{\alpha \in A} \left| T \left( \mathcal{M}^{\phi_{\alpha}} f \chi_{\mathbb{R}^n\setminus P^*} \right)(x)\right|  -  \sup_{\alpha \in A}\left|T\left(\mathcal{M}^{\phi_{\alpha}} f \chi_{\mathbb{R}^n\setminus P^*} \right) (x_p) \right| \right|
\\
& \le  \sup_{\alpha \in A} \left| T \left( \mathcal{M}^{\phi_{\alpha}} f \chi_{\mathbb{R}^n\setminus P^*} \right)(x) - T \left( \mathcal{M}^{\phi_{\alpha}} f \chi_{\mathbb{R}^n \setminus P^*} \right) (x_p) \right|,
\end{align*}
\begin{align*}
& \le \int_{ \mathbb{R}^n\setminus P^*} | K(x,y)-K(x_p,y) | |f(y) |dy
\\
& \le \sum_{j=1}^{\infty} \int_{  2^k Q \setminus 2^{k-1}Q} | K(x,y)-K(x_p,y) | |f(y) |dy
\\
& \leq  \sum_{j=1}^{\infty} \left( \int_{  2^k Q \setminus 2^{k-1}Q} | K(x,y)-K(x_p,y) |^{r}dy \right)^{\frac{1}{r}}  \left( \int_{  2^k Q \setminus 2^{k-1}Q} | f(y) |^{r'}dy \right)^{\frac{1}{r'}}   \notag
\\
& \leq \sum_{j=1}^{\infty} \| K(x,.)- K(x_p,.)\|_{L^r(2^k Q \setminus 2^{k-1}Q)}  \left( \frac{|2^k Q|}{|2^k Q|}\int_{  2^k Q } | f(y) |^{r'}dy \right)^{\frac{1}{r'}}  \notag
\\
& \leq  \sup_{Q \ni x} ess\sup_{x',x'' \in Q} \sum_{j=1}^{\infty}  \left| 2^k Q \right|^{\frac{1}{r'}}  \| K(x',.)- K(x'',.)\|_{L^r(2^k Q \setminus 2^{k-1}Q)}  \times  M_{r'} f(x)  \notag
\\
& \leq \mathcal{\kappa}_{r}  ~ M_{r'} f(x).
\end{align*}

Therefore we have
	
\begin{align*}
\left| T^{\mathcal{F}} \left( f \chi_{\mathbb{R}^n\setminus P^*} \right) (x) -  T^{\mathcal{F}} \left( f \chi_{\mathbb{R}^n\setminus P^*} \right) (x') \right| \lesssim \mathcal{\kappa}_{r}  ~~ M_{r'} f(x),
\end{align*}

This argument follows $M^{\sharp}_{ T^{\mathcal{F}}, \alpha}f(x) $ is weak type $(r',r')$.
\end{proof}

Following theorem give sparse bound for maximally modulated singular integral.

\begin{theorem}\label{ModSp}
Let $T^{\mathcal{F}}$ be a sub-linear operator satisfying the $W_q$ condition and such that $\mathcal{M}_{T^{\mathcal{F}},\alpha}^{\#}$ is of weak type $(r,r)$ for some $\alpha \geq 3$, where $1\leq q,r < \infty$. Let $s=\max(q,r)$. Then, for every compactly supported $f \in L^s(\mathbb{R}^n)$, there exists $\frac{1}{2.\alpha^n}$-sparse family $\mathcal{S}$ such that
\[
\left| T^{\mathcal{F}}f(x) \right| \leq C \sum_{Q \in \mathcal{S}} \langle f \rangle_{s,Q} \chi_{Q}(x)
\]
for a.e $x \in \mathbb{R}^n $, where
$C=c_{n,r,s,\alpha} \left(  \psi_{T^{\mathcal{F}},q}(\frac{1}{12.(2\alpha)^n}) + \| \mathcal{M}_{T,\alpha}^{\#}\|_{L^r \to L^{r,\infty}}  \right)$.
\end{theorem}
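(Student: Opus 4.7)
The plan is to adapt the iterative sparse-domination scheme of Lerner and Ombrasi \cite{LO} to the maximally modulated setting. After the standard reduction (covering $\mathbb{R}^n$ by cubes $Q_0$ whose $\alpha$-dilate contains $\operatorname{supp} f$), the task reduces to constructing, for a fixed cube $Q_0$, a $\tfrac12$-sparse family $\widetilde{\mathcal{S}}\subset\mathcal{D}(Q_0)$ of stopping cubes yielding the pointwise estimate $|T^{\mathcal{F}}(f\chi_{\alpha Q_0})(x)|\le C\sum_{P\in\widetilde{\mathcal{S}}}\langle f\rangle_{s,\alpha P}\chi_P(x)$. The $\tfrac{1}{2\alpha^n}$-sparse family $\mathcal{S}$ in the statement is then $\{\alpha P:P\in\widetilde{\mathcal{S}}\}$, with the disjoint subsets $E_{\alpha P}:=E_P\subset\alpha P$ inherited from $\widetilde{\mathcal{S}}$ satisfying $|E_{\alpha P}|\ge\tfrac12|P|=\tfrac{1}{2\alpha^n}|\alpha P|$.

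For each cube $Q$ arising in the iteration, I would define the exceptional set
\begin{align*}
E_Q &:= \bigl\{x\in Q: |T^{\mathcal{F}}(f\chi_{\alpha Q})(x)| > \psi_{T^{\mathcal{F}},q}\bigl(\tfrac{1}{12(2\alpha)^n}\bigr)\langle f\rangle_{q,\alpha Q}\bigr\} \\
&\quad \cup \bigl\{x\in Q: M^{\sharp}_{T^{\mathcal{F}},\alpha}(f\chi_{\alpha Q})(x) > c_{n,r,\alpha}\|M^{\sharp}_{T^{\mathcal{F}},\alpha}\|_{L^r\to L^{r,\infty}}\langle f\rangle_{r,\alpha Q}\bigr\}.
\end{align*}
Applying the $W_q$ property to $\alpha Q$ with $\lambda=\tfrac{1}{12(2\alpha)^n}$ and the weak-type $(r,r)$ bound on $M^{\sharp}_{T^{\mathcal{F}},\alpha}$ for $c_{n,r,\alpha}$ large enough forces $|E_Q|$ to occupy only a small fraction of $|Q|$. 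A Calder\'on--Zygmund stopping on $\chi_{E_Q}$ inside $Q$ then produces pairwise disjoint maximal dyadic subcubes $\{P_j\}\subset\mathcal{D}(Q)$ with $\sum_j|P_j|\le\tfrac12|Q|$, with a.e.\ $x\in Q\setminus\bigcup_j P_j$ outside $E_Q$, and with $|P_j\setminus E_Q|$ occupying a definite fraction of $|P_j|$ (using $\alpha\ge 3$).

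On $Q\setminus\bigcup_j P_j$ the bound $|T^{\mathcal{F}}(f\chi_{\alpha Q})(x)|\le C\langle f\rangle_{s,\alpha Q}$ is immediate from the definition of $E_Q$ together with $q\le s$. On each $P_j$, sublinearity of $T^{\mathcal{F}}$ gives $|T^{\mathcal{F}}(f\chi_{\alpha Q})(x)|\le T^{\mathcal{F}}(f\chi_{\alpha Q\setminus\alpha P_j})(x)+T^{\mathcal{F}}(f\chi_{\alpha P_j})(x)$, the second summand being passed to the next iteration level. For the first summand, the defining property of $M^{\sharp}_{T^{\mathcal{F}},\alpha}$ applied with the admissible cube $P_j\ni x$ yields, for a.e.\ $x,y\in P_j$ and any $z\in P_j$,
\[
|T^{\mathcal{F}}(f\chi_{\alpha Q\setminus\alpha P_j})(x)-T^{\mathcal{F}}(f\chi_{\alpha Q\setminus\alpha P_j})(y)|\le M^{\sharp}_{T^{\mathcal{F}},\alpha}(f\chi_{\alpha Q})(z).
\]
A second application of $W_q$, now to $\alpha P_j$ at the same parameter $\tfrac{1}{12(2\alpha)^n}$, locates a ``good'' point $y\in P_j\setminus E_Q$ at which additionally $T^{\mathcal{F}}(f\chi_{\alpha P_j})(y)\le\psi_{T^{\mathcal{F}},q}\bigl(\tfrac{1}{12(2\alpha)^n}\bigr)\langle f\rangle_{q,\alpha P_j}$; the existence of such $y$ rests on $|P_j\setminus E_Q|$ strictly dominating the $W_q$-exceptional portion inside $P_j$. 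Taking $z=y$ and using the triangle inequality $T^{\mathcal{F}}(f\chi_{\alpha Q\setminus\alpha P_j})(y)\le T^{\mathcal{F}}(f\chi_{\alpha Q})(y)+T^{\mathcal{F}}(f\chi_{\alpha P_j})(y)$ gives, for a.e.\ $x\in P_j$,
\[
T^{\mathcal{F}}(f\chi_{\alpha Q\setminus\alpha P_j})(x)\le C\bigl(\langle f\rangle_{s,\alpha Q}+\langle f\rangle_{s,\alpha P_j}\bigr).
\]
Iterating the resulting one-step recursion $|T^{\mathcal{F}}(f\chi_{\alpha Q})|\chi_Q\le C\langle f\rangle_{s,\alpha Q}\chi_Q+\sum_j\bigl(C\langle f\rangle_{s,\alpha P_j}+T^{\mathcal{F}}(f\chi_{\alpha P_j})\bigr)\chi_{P_j}$ and absorbing the extra additive terms into the accumulating sparse sum delivers the claim.

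The hardest step is the simultaneous control at a single ``good'' point $y\in P_j$ of three distinct quantities: the $W_q$-bound on $T^{\mathcal{F}}(f\chi_{\alpha Q})(y)$ coming from $E_Q$, the $W_q$-bound on $T^{\mathcal{F}}(f\chi_{\alpha P_j})(y)$ from the second $W_q$ application to $\alpha P_j$, and the $M^{\sharp}$-bound. Balancing these three measure estimates so that each ``bad'' set strictly underfills its allotted share of $|P_j|$ is precisely what pins down the explicit parameter $\tfrac{1}{12(2\alpha)^n}$ and forces the structure $c_{n,r,s,\alpha}\bigl(\psi_{T^{\mathcal{F}},q}(\tfrac{1}{12(2\alpha)^n})+\|M^{\sharp}_{T^{\mathcal{F}},\alpha}\|_{L^r\to L^{r,\infty}}\bigr)$ of the sparse constant in the theorem.
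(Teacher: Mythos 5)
Your proposal is correct and follows essentially the same Lerner--Ombrosi iterated stopping-time scheme that the paper uses: an exceptional set controlled through the $W_q$ property and the weak-type $(r,r)$ bound on $M^{\sharp}_{T^{\mathcal{F}},\alpha}$, a local Calder\'on--Zygmund decomposition producing the cubes $P_j$, the oscillation bound for $T^{\mathcal{F}}(f\chi_{\alpha Q\setminus\alpha P_j})$ via the defining property of $M^{\sharp}_{T^{\mathcal{F}},\alpha}$, a good point $y\in P_j$ found by a second $W_q$ application to $\alpha P_j$, and iteration. The one cosmetic difference is that the paper's exceptional set $\Omega$ also includes a level set of $M_s(f\chi_{Q^*})$, which lets the paper transfer $\langle f\rangle_{s,P_j^*}$ back to $\langle f\rangle_{s,Q^*}$ and emit exactly one average per iteration level; you omit this term and instead carry the extra $\langle f\rangle_{s,\alpha P_j}$ into the sparse sum, which merely doubles the eventual constant. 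Both versions give the stated conclusion with the same parameter $\tfrac{1}{12(2\alpha)^n}$ and the same structure for $C$.
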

\begin{proof}
The proof is almost identical to the proof of \cite[Theorem 2.2]{LO}. For the sake of completeness, we give the proof here. For any arbitrary cube $Q$, set $Q^* = \alpha Q$ where $\alpha \ge 3$. In point view of the definition of operator $\mathcal{M}_{T,\alpha}^{\#}$, for every cube $P$ and for all $x',x''\in P$, we have the following inequality
\begin{align} \label{lo T1 2}
\left| T^{\mathcal{F}}\left( f \chi_{\mathbb{R}^n\setminus P^*} \right) (x') -  T^{\mathcal{F}} \left( f \chi_{\mathbb{R}^n\setminus P^*} \right) (x'') \right| \leq \inf_{P} \mathcal{M}_{T^{\mathcal{F}},\alpha}^{\#}(f).
\end{align}

Set
$$
\widetilde{M}_{T^{\mathcal{F}}}f = \max \left( |T^{\mathcal{F}}f|,\mathcal{M}_{T^{\mathcal{F}},\alpha}^{\#} f \right).
$$

In point view of the theorem assumptions, the weak type $(1,1)$ of $M$ and H\"oder's inequality, the set
\begin{align*}
\Omega =\left\{ x\in Q :\max \left( \frac{M_s(f\chi_{Q^*})(x)}{c} , \frac{|\widetilde{M}_{T^{\mathcal{F}}}(f\chi_{Q^*})(x)|}{A }\right) > \langle f \rangle_{s,Q^*}  \right\}
\end{align*}
satisfies $|\Omega| \leq \frac{1}{2^{n+2}} |Q|$, where
\[
A =2\psi_{T^{\mathcal{F}},q}(\frac{1}{12.(2\alpha)^n}) + c_{n,r,s,\alpha}\| \mathcal{M}_{T^{\mathcal{F}},\alpha}^{\#}\|_{L^r \to L^{r,\infty}}.
\]

The local Calder\'on-Zygmund decomposition guarantees that there exists a family of disjoint cubes $\{P_j\}_{j=1}^{\infty} \subset Q $ such that
\begin{equation}\label{lo T1 3}
\frac{1}{2^{n+1}} |P_j| < |P_j \cap \Omega| \leq \frac{1}{2} |P_j|, \qquad \qquad  | \Omega \setminus \cup_{j=1}^{\infty} P_j | = 0 .
\end{equation}
consequently,
\begin{equation} \label{lo T1 4}
|T(f\chi_{Q^*})(x)| \leq A \langle f \rangle_{s,Q^*}  \qquad for \quad a.e  ~x \in Q\setminus \cup_{j=1}^{\infty} P_j .
\end{equation}

On the other hands, for almost all $x\in P_j$ and $x' \in P_j\setminus \Omega$,
\begin{align} \label{lo T1 5}
|T^{\mathcal{F}}(f\chi_{Q^* \setminus P^*_j})(x)| & \leq  \inf_{P_j} \mathcal{M}_{T^{\mathcal{F}},\alpha}^{\#}(f\chi_{P_j^*}) +|T^{\mathcal{F}}(f\chi_{Q^* \setminus P^*_j})(x')| \notag
\\
& =  \inf_{P_j} \mathcal{M}_{T^{\mathcal{F}},\alpha}^{\#}(f\chi_{P_j^*}) + |T^{\mathcal{F}}(f\chi_{Q^* } - f\chi_{P^*_j })(x')| \notag
 \\
& \leq \inf_{P_j} \mathcal{M}_{T^{\mathcal{F}},\alpha}^{\#}(f\chi_{P_j^*}) +  |T^{\mathcal{F}}(f\chi_{Q^*})(x')| + |T^{\mathcal{F}}(f\chi_{P^*_j})(x')| \notag
\\
& \leq \inf_{P_j \setminus \Omega} \mathcal{M}_{T^{\mathcal{F}} ,\alpha} ^{\#}(f\chi_{P_j^*}) + A \langle f \rangle_{s,Q^*} + |T^{\mathcal{F}} (f\chi_{P^*_j})(x')| \notag
\\
& \leq A \langle f \rangle_{s,Q^*} + A \langle f \rangle_{s,Q^*} + |T^{\mathcal{F}}(f\chi_{P^*_j})(x')| \notag
\\
& = 2A \langle f \rangle_{s,Q^*} + |T^{\mathcal{F}}(f\chi_{P^*_j})(x')|.
\end{align}

Thanks to \eqref{lo T1 3}, one can obtain  $ |P_j \setminus \Omega| \geq \frac{1}{2} |P_j|$. On the other hand,
\begin{equation*}
| \Omega' | = \left| \left\{ x\in P_j: |T^{\mathcal{F}}(f\chi_{P_j^*})(x)| >A \langle f \rangle_{s,P_j^*}  \right\} \right| \leq \frac{1}{2^{n+2}} \ | P_j |.
\end{equation*}

As result we have
\[
\inf_{P_j \setminus \Omega}|T^{\mathcal{F}}(f\chi_{P_j^*})| \leq A \langle f \rangle_{s,P^*}  \leq A \inf_{P_j^*} M_sf \leq A \inf_{P_j} M_sf \leq A \inf_{P_j \setminus \Omega} M_sf \leq c A \langle f \rangle_{s,Q^*},
\]
which, combined with \eqref{lo T1 5}, implies that for all $x \in P_j$,
\begin{equation}\label{lo T1 6}
|T^{\mathcal{F}}(f\chi_{Q^* \setminus P^*_j})(x)| \leq (2+c)A \langle f \rangle_{s,Q^*}.
\end{equation}

Form this and from \eqref{lo T1 4}, for  a.e $x \in Q$

\begin{align} \label{lo T1 7}
|T^{\mathcal{F}}(f\chi_{Q^*})(x)|\chi_{Q}(x)& = |T^{\mathcal{F}}(f\chi_{Q^*})(x)| \chi_{Q \setminus \cup_{j=1}^\infty P_j}(x) +  |T^{\mathcal{F}}(f\chi_{Q^*})(x)| \chi_{ \cup_{j=1}^\infty P_j}(x) \notag
\\
& = (3+c)A \langle f \rangle_{s,Q^*} + \sum_{j=1}^{\infty} |T^{\mathcal{F}}(f\chi_{P_j^*})(x)|\chi_{ P_j}(x).
\end{align}

By \eqref{lo T1 3}, $\sum_{j=1}^{\infty}  |P_j| \leq \frac{1}{2} |Q|.$ Therefore, iterating \eqref{lo T1 7}, we obtain a
$\frac{1}{2}$-sparse family $\mathcal{F}_Q$ of sub cubes of $Q$ such that
\begin{equation} \label{lo T1 8}
|T^{\mathcal{F}}(f\chi_{Q^*})(x)| \leq (3 +c) A   \sum_{\mathcal{R} \in \mathcal{F}_Q} \langle f \rangle_{s,\mathcal{R}^*}\chi_{\mathcal{R}}(x).
\end{equation}
So, the proof is completed with \cite[Lemma 2.1]{LO}.
\end{proof}

\begin{remark}
The cubes of the resulting sparse family $S$ are not dyadic. But there is a well known result which says for an arbitrary cube $Q$, there are $n+1$ general dyadic grids $D^{\alpha}$ such that every cube $ Q \subset \mathbb{R}^n$ is contained in some cube $Q' \in D^{\alpha}$ such that $|Q| \le c_n |Q'|$ (see \cite{Conde}). So, in Theorem \ref{ModSp}, one can write
\begin{equation}\label{BDS}
\left| T^{\mathcal{F}}f(x) \right| \leq C_{n,s} \sum_{j=1}^{n+1} \sum_{Q \in \mathcal{S}_j} \langle f \rangle_{s,Q} \chi_{Q}(x),
\end{equation}
where $\mathcal{S}_j$ is a sparse family from a dyadic grid $D^j$.  We note that in most of papers number of dyadic grids assumed $3^n$ or $2^n$, for example see the papers \cite{LNE, HYT3,GJP}, but the number $n+1$ is optimal.
\end{remark}

Readily we have following theorem regarding sparse bound to   $T^{\mathcal{F}}_{\ast}f(x)$ which defined in \eqref{Max}.
\begin{proposition}\label{MMO}
Let $T^{\mathcal{F}}$ be a sub-linear operator satisfying the $W_q$ condition and such that $\mathcal{M}_{T^{\mathcal{F}},\alpha}^{\#}$ is of weak type $(r,r)$ for some $\alpha \geq 3$, where $1\leq q,r < \infty$. Let $s=\max(q,r)$. Then, for every compactly supported $f \in L^s(\mathbb{R}^n)$, there exists $\frac{1}{2.\alpha^n}$-sparse family $\mathcal{S}$ such that
\begin{equation}\label{BDS}
\left|T^{\mathcal{F}}_{\ast}f(x) \right| \leq C_{n,s} \sum_{j=1}^{n+1} A_{S_j}^sf(x) ,
\end{equation}
where $A_{S_j}^sf(x) = \sum\limits_{Q \in \mathcal{S}_j} \langle f \rangle_{s,Q} \chi_{Q}(x) $.
\end{proposition}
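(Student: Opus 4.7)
The plan is to treat $T^{\mathcal{F}}_*$ as itself a maximally modulated operator with an enlarged index set, verify that it inherits the hypotheses of Theorem~\ref{ModSp} from the given hypotheses on $T^{\mathcal{F}}$, and then apply that theorem followed by the $(n+1)$-dyadic-lattice observation in the remark preceding this proposition. First I would write
\[
T^{\mathcal{F}}_*f(x)=\sup_{\epsilon>0}\sup_{\alpha\in A}\bigl|T_{\epsilon}(\mathcal{M}^{\phi_{\alpha}}f)(x)\bigr|,
\]
so that $T^{\mathcal{F}}_*$ is sub-linear as a double supremum of sub-linear operators. The weak-type bound for the sharp function $\mathcal{M}^{\#}_{T^{\mathcal{F}}_*,\alpha}$ then follows by repeating the argument of Lemma~\ref{WEAK} verbatim: for $y\in 2^{k}Q\setminus 2^{k-1}Q$ with $k\ge 1$ and $x,x_{P}\in Q$ with $\alpha\ge 3$, the distances $|x-y|$ and $|x_{P}-y|$ are comparable, so the truncation indicator $\chi_{|\cdot-y|>\epsilon}$ behaves uniformly in $\epsilon$ outside at most one annulus whose contribution is absorbed into the $L^{r}$-H\"ormander sum. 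Taking the suprema in $\epsilon$ and $\alpha$ before integration preserves the pointwise bound by $\kappa_{r}M_{r'}f(x)$.

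The main obstacle is to verify the $W_{q}$ property for $T^{\mathcal{F}}_*$. For this I would derive a Cotlar-type inequality
\[
T^{\mathcal{F}}_*f(x)\lesssim M\bigl(T^{\mathcal{F}}f\bigr)(x)+\mathcal{M}^{\#}_{T^{\mathcal{F}},\alpha}f(x)+M_{s}f(x),
\]
obtained by fixing $\epsilon>0$, selecting a cube $Q_{x}$ of side comparable to $\epsilon$ containing $x$, and splitting $f=f\chi_{\alpha Q_{x}}+f\chi_{\mathbb{R}^{n}\setminus\alpha Q_{x}}$. The local piece is controlled by the weak-type information built into the $W_{q}$ hypothesis for $T^{\mathcal{F}}$ applied on $Q_{x}$, while the nonlocal piece is dominated by the sharp-maximal function by the uniform-in-$\epsilon$ kernel-difference estimate from the previous step. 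Combining this pointwise domination with the weak $(1,1)$ bound for $M$, the weak $(r,r)$ bound for $\mathcal{M}^{\#}_{T^{\mathcal{F}},\alpha}$, and the distributional estimate encoded in $W_{q}$ for $T^{\mathcal{F}}$, a layer-cake computation delivers the $W_{s}$ property for $T^{\mathcal{F}}_*$ with $s=\max(q,r)$ and with $\psi_{T^{\mathcal{F}}_*,s}$ controlled by $\psi_{T^{\mathcal{F}},q}$ and $\|\mathcal{M}^{\#}_{T^{\mathcal{F}},\alpha}\|_{L^{r}\to L^{r,\infty}}$.

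Once both hypotheses are verified for $T^{\mathcal{F}}_*$, Theorem~\ref{ModSp} directly produces a $\tfrac{1}{2\alpha^{n}}$-sparse family $\mathcal{S}$ such that $|T^{\mathcal{F}}_*f(x)|\le C\sum_{Q\in\mathcal{S}}\langle f\rangle_{s,Q}\chi_{Q}(x)$ almost everywhere, with $C$ given in the same form as in Theorem~\ref{ModSp}. Finally I would invoke the $(n+1)$-dyadic-lattice result cited in the remark after Theorem~\ref{ModSp}: each cube $Q\in\mathcal{S}$ is contained in a cube $Q'\in D^{j}$ of comparable volume for some $j\in\{1,\dots,n+1\}$, and partitioning $\mathcal{S}$ according to the index $j$ produces dyadic sparse subfamilies $\mathcal{S}_{j}\subset D^{j}$ whose sum reproduces the right-hand side of \eqref{BDS}. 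The Cotlar-type step is the only substantive point; the rest is a mechanical transfer of the proof of Theorem~\ref{ModSp} to the enlarged family $A\times(0,\infty)$.
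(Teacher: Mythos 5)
The paper provides no proof of Proposition~\ref{MMO} at all; it is asserted as ``readily'' following from Theorem~\ref{ModSp} and the preceding remark about $(n+1)$ dyadic grids. Your high-level plan --- treat $T^{\mathcal{F}}_*$ as a maximally modulated operator over the enlarged index set $A\times(0,\infty)$, verify the two hypotheses of Theorem~\ref{ModSp}, apply the theorem, and then pass to dyadic lattices --- is exactly what the paper's terse remark suggests, so on that level you are on the intended track, and the final paragraph (applying Theorem~\ref{ModSp} and the covering-by-$(n+1)$-dyadic-grids reduction) is fine.

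However, the two steps you use to transfer the hypotheses from $T^{\mathcal{F}}$ to $T^{\mathcal{F}}_*$ both contain the same genuine gap. The claim that Lemma~\ref{WEAK} applies to $\mathcal{M}^{\#}_{T^{\mathcal{F}}_*,\alpha}$ ``verbatim'' is not correct: for the truncated operator, the relevant integrand is $K(x,y)\chi_{\{|x-y|>\epsilon\}}-K(x_p,y)\chi_{\{|x_p-y|>\epsilon\}}$, and on the symmetric difference of the two truncation regions this reduces to $\pm K(\cdot,y)$ itself --- \emph{not} a kernel difference. That symmetric difference is a thin annulus of width $\approx\ell(P)$ at distance $\approx\epsilon$, and estimating the kernel on it requires a pointwise or integral \emph{size} bound on $K$ (e.g.\ $|K(x,y)|\lesssim|x-y|^{-n}$ or an $L^r$ analogue $\sup_Q|Q|^{1/r'}\|K(x,\cdot)\|_{L^r(2Q\setminus Q)}<\infty$). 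The $L^r$-H\"ormander condition of Definition~\ref{H} controls only differences $K(x',\cdot)-K(x'',\cdot)$, so nothing in the stated hypotheses ``absorbs'' this annular term, contrary to what you assert. The same problem re-appears in your Cotlar-type inequality: the local piece $T_\epsilon(\mathcal{M}^{\phi_\alpha}(f\chi_{\alpha Q_x}))(x)$ is an integral over the annulus $\alpha Q_x\cap\{|x-y|>\epsilon\}$, and you cannot replace $T_\epsilon$ by the untruncated $T^{\mathcal{F}}$ there nor invoke the $W_q$ property of $T^{\mathcal{F}}$, since $T_\epsilon$ is defined precisely by the kernel on that annulus. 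To repair the argument you would need to add an explicit kernel size hypothesis (or, alternatively, assume the a priori weak-type bound~\eqref{Assum} for $T^{\mathcal{F}}_*$ directly rather than only for $T^{\mathcal{F}}$). As stated, neither the paper nor your proposal supplies that ingredient, so the transfer from $T^{\mathcal{F}}$ to $T^{\mathcal{F}}_*$ is incomplete.
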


\begin{theorem}
Let $T^{\mathcal{F}}$ be a maximally modulated singular integral operator with the (maximally) modulated maximal singular integral $T^{\mathcal{F}}_{\ast}$. Let $Q$ be a cube and let $f \in L^{\infty}_{c}(\mathbb{R}^{n})$ such that $\texttt{supp}(f) \subseteq Q$. Then there are constants $\alpha, c > 0$ such that
\begin{equation}
|\{ x \in Q : |T^{\mathcal{F}}_{\ast} f(x)| > t~ M_rf(x)\}| \le c e^{-at} |Q|, \quad  t > 0.
\end{equation}
\end{theorem}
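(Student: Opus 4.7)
I would follow the sparse-to-local-decay strategy of Conde-Alonso, Lerner, P\'erez and Rela \cite{CLPR}: once a sparse bound is available, the local exponential decay is essentially a combinatorial property of the sparse family. The key input is already in hand through Proposition \ref{MMO}, which yields
$$T^{\mathcal{F}}_{\ast}f(x)\le C_{n,s}\sum_{j=1}^{n+1} A^{s}_{\mathcal S_j}f(x),\qquad s=\max(q,r),$$
where each $\mathcal S_j$ is an $\eta$-sparse family, $\eta=1/(2\alpha^n)$, drawn from a dyadic grid $\mathcal D^{j}$ as in the Remark following Theorem \ref{ModSp}. By a union bound the theorem reduces to proving, for a single such sparse family $\mathcal S$ and every $t>0$,
$$\bigl|\{x\in Q:\,A^{s}_{\mathcal S}f(x)>t\,M_r f(x)\}\bigr|\le c\,e^{-a t}|Q|.$$

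The next step I would perform is a pointwise reduction to a depth-function estimate. Set $k_{\mathcal S}(x):=\#\{Q'\in\mathcal S:\,x\in Q'\}$. For every $Q'\ni x$, H\"older's inequality gives $\langle f\rangle_{s,Q'}\le M_r f(x)$ in the regime $s\le r$ (so that the sparse family is compatible with the maximal function of the statement), hence
$$A^{s}_{\mathcal S}f(x)\le k_{\mathcal S}(x)\,M_r f(x).$$
Consequently the level set in question sits inside $\{x\in Q:\,k_{\mathcal S}(x)>t\}$, and it suffices to produce an exponential tail for $k_{\mathcal S}$. This is where sparsity enters. By an induction on $m$, for any maximal $P\in\mathcal S$,
$$|\{x\in P:\,k_{\mathcal S}(x)\ge m\}|\le (1-\eta)^{m-1}|P|,$$
the inductive step using that the maximal proper $\mathcal S$-descendants of $P$ cover at most $(1-\eta)|P|$ (the pairwise-disjoint Carleson sets $E_{Q'}\subset Q'$ satisfy $|E_{P}|\ge\eta|P|$, so $E_P$ is necessarily disjoint from all descendants). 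Since $\texttt{supp}(f)\subseteq Q$, only the finitely many maximal cubes $P\in\mathcal S$ meeting $Q$ contribute, and summing over them yields
$$|\{x\in Q:\,k_{\mathcal S}(x)\ge t\}|\le c\,(1-\eta)^{t}|Q|=c\,e^{-a_{0} t}|Q|,\qquad a_{0}:=-\log(1-\eta)>0.$$
Chaining this through the sparse bound from the first step then proves the theorem with $a=a_{0}/((n+1)C_{n,s})$ and an adjusted constant $c$.

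The main obstacle is really bookkeeping rather than any individual inequality: one must verify that the sparse families inherited from Proposition \ref{MMO} can be truncated to their restrictions meeting $Q$ without loss of sparsity, and one must check the range of parameters in which $\langle f\rangle_{s,Q'}\le M_r f(x)$ holds pointwise. The latter is automatic if $q\le r$ (so that $s=r$); in the opposite regime the statement should be read with $M_s$ in place of $M_r$. Once these points are settled, the depth-function iteration is a clean combinatorial argument and the exponential decay follows immediately.
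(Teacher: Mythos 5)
Your proof proposal is correct in outline but follows a genuinely different route from the paper's own argument. The paper proves the local decay theorem via the local mean oscillation machinery: it bounds the median $|m_{T^{\mathcal F}f}(Q)|$ by $M_r f$ using the a priori weak-type bound \eqref{weak type}, Kolmogorov's inequality and \cite[Lemma 6.3]{FALon}; it then controls the local sharp maximal function $M^{\sharp}_{\lambda;Q}(T^{\mathcal F}f)$ by $M^{\sharp}_{\delta}(T^{\mathcal F}f)\lesssim M_{\Phi}f\lesssim M_r f$; and finally it invokes the exponential Fefferman--Stein-type inequality of Ortiz-Caraballo--P\'erez--Rela \cite{OPR}, $|\{x\in Q:|f-m_Q(f)|>tM^{\sharp}_{\lambda;Q}f\}|\le c\,e^{-\beta t}|Q|$, to obtain the decay. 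Your proof instead passes through the sparse bound of Proposition \ref{MMO} and the depth-function estimate for $\mathcal A^{s}_{\mathcal S}$ in the spirit of \cite{CLPR} --- a route that the paper itself flags as viable in the Remark immediately following the theorem. The trade-offs you identify are real: the sparse route naturally produces $M_s$ with $s=\max(q,r)$ rather than $M_r$, so you only recover the stated bound when $q\le r$; the paper's proof avoids this mismatch because the median and sharp-function bounds are expressed directly in $M_r$ via the Orlicz maximal function. One small inaccuracy in your inductive step: from $\eta$-sparsity alone, $E_P$ is disjoint from every other $E_{P'}$, not from the descendant cubes $P'$ themselves, so the packing bound $\sum_{P'}|P'|\le(1-\eta)|P|$ does not follow as directly as you state. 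For the specific $\tfrac12$-sparse family built by the stopping-time iteration of Theorem \ref{ModSp} it does hold (there $E_P=P\setminus\cup_j P_j$ genuinely avoids all children), and in full generality the depth-function exponential tail is a known fact proved in \cite{CLPR}; but the one-line justification you give would not survive a referee unchanged.
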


\begin{proof}
It follows from Lemma \ref{ME}, \eqref{AV}, \eqref{weak type} and Kolmogorov's inequality
\begin{align*}
|m_{T^{\mathcal{F}}}(Q)| &\le \left(\frac{4}{ |Q|} \int_{Q} |T^{\mathcal{F}}|^{\delta} dx\right)^{\frac{1}{\delta}},
\\
&\le  c_{\delta}\|T^{\mathcal{F}}(f)\|_{L^{1, \infty}(Q, \frac{dx}{|Q|})}
\\
& \le c_{\delta} ~ \|f(x)\|_{\Phi,Q}
\\
& \le c_{\delta} ~  M_{\Phi}(f)(x)
\\
&\le c_{\delta} ~ \gamma_{\Phi}(r')^{\frac{1}{r}} M_{r}(f)(x)
\end{align*}
where $r >1$ and for any $0<\delta$. In last inequality we use the \cite[Lemma 6.3]{FALon}.
By  consequence of the definitions of following notions, for given a cube $Q$, $ \delta> 0$ and $0 < \lambda < \frac{1}{2},$ there exists a constant $c = c_{\lambda}$
such that
\begin{equation}\label{Key}
M_{\lambda ; Q}^{\sharp}(f\chi_{Q})(x) \le c M_{\delta}^{\sharp}(f\chi_{Q})(x), \qquad x \in Q.
\end{equation}
where
$M_{\delta}^{\sharp}(f\chi_{Q})(x)= \sup\limits_{Q \ni x} \inf\limits_{c} \left(\frac{1}{|Q|} \int_{Q} |f(y)-c|^{\delta}\right)^{\frac{1}{\delta}}$. This inequality, readily valid as following:
\begin{align*}
M_{\lambda ; Q}^{\sharp}f(x)&= \sup_{ Q \ni x } \omega_{\lambda}(f; Q)
\\
&=  \sup_{Q \ni x} \inf_{c\in \mathbb{R}} ((f - c)\chi_{Q})^{*}(\lambda |Q|),
\\
& \le c_{\lambda} \sup_{Q \ni x} \inf_{c\in \mathbb{R}} \left(\frac{1}{|Q|}\int_{Q} |f(y) - c|^{^{\delta} } dy \right)^{\frac{1}{^{\delta} }}
\\
& \le c_{\lambda} M_{\delta}^{\sharp}(f\chi_{Q})(x)
\end{align*}
the last inequality comes from $(f \chi_{Q})^{\ast}(t) \le \frac{1}{t^{\frac{1}{\delta}}} \|f \chi_{Q}\|_{\delta, \infty}$.

Also, based on arguments in \cite[proposition 6.1]{FALon} and \cite[Lemma 6.3]{FALon}, one can observe
\[
M_{\delta}^{\sharp}(T^{\mathcal{F}} f)(x) \lesssim   M_{\Phi}f(x) \lesssim M_{r}f(x).
\]
Moreover, the following Feierman-Stein inequality  was obtained in \cite{OPR}:
\[
|\{x \in Q; |f(x)-m_{Q}(f)| > t M^\sharp_{\lambda; Q}(f)(x)\}| \le c_3 e^{\beta t } |Q|,  \qquad \lambda = \frac{1}{2^{n+2}}.
\]

Consequently, we deduce that for $t > c_1$
\begin{align*}
& \left|\left\{x \in Q; ~|T^{\mathcal{F}}_{\ast}f(x)| > t M_{r}(f)(x)\right\}\right| \le \left|\left\{x \in Q; ~|T^{\mathcal{F}}f(x)| > t M_{r}(f)(x)\right\}\right|
\\
&\qquad \qquad \qquad\le\left|\left\{x \in Q; ~|T^{\mathcal{F}}f(x) - m_{T^{\mathcal{F}}f}(Q)| > (t-c_1) M_{r}(f)(x)\right\}\right|
\\
&\qquad \qquad \qquad\le\left|\left\{x \in Q; |T^{\mathcal{F}}-m_{T^{\mathcal{F}}f(x)}(Q)| > c_2^{-1} (t-c_1) M^{\sharp}_{p; 2^{-2-n},Q}(T^{\mathcal{F}})f(x)\right\}\right|
\\
&\qquad \qquad \qquad \le c_3e^{-\frac{\beta(t-c_1)}{c_2}} |Q|.
\end{align*}
So, taking $c = \max \{1, c_3\}  e^{\frac{\beta ~ c_1}{c_2}}$ and $\alpha = \frac{\beta}{c_2}$, we obtain the desired result.

\end{proof}

\begin{remark}
We remark that local decay estimate can be derive by the combination of the sparse domination results obtained in Theorem \ref{ModSp} and Proposition \ref{MMO} and the estimate
\[
 \left|\left\{x \in Q: \mathcal{A}_{S}^{r}f > t M_{r}(f) \right\}\right| \le c_1 e^{c_2 t^{r}} |Q|, \qquad r>0
\]
which is proved in  \cite{CLPR}.
\end{remark}

\bibliographystyle{amsplain}

\bigskip

\textbf{Arash Ghorbanalizadeh}

Department of Mathematics,

Institute for Advanced Studies in Basic Sciences (IASBS), Zanjan 45137-66731, Iran

E-mail: ghorbanalizadeh@iasbs.ac.ir; \,\,\, gurbanalizade@gmail.com

\bigskip

\textbf{Sajjad Hasanvandi}

Department of Mathematics,

Institute for Advanced Studies in Basic Sciences (IASBS), Zanjan 45137-66731, Iran

E-mail: hasanvandi.s@iasbs.ac.ir.

\end{document}